\definecolor{bulgarianrose}{rgb}{0.28, 0.02, 0.03}
\newtheorem{theorem}{Theorem}[section]
\newtheorem{lemma}[theorem]{Lemma}
\theoremstyle{definition}
\newtheorem{construction}[theorem]{Construction}
\newtheorem*{acknowledgement}{Acknowledgement}
\newcommand{\tr}[2][]{\ensuremath{{\textnormal{tr}}_{#1}(#2)}}
\newcommand{\Pt}{\ensuremath{{\textnormal{P}}}}
\newcommand{\M}[2][]{\ensuremath{{\textnormal{M}}_{#1}(#2)}}
\def\imod#1{\allowbreak\mkern10mu({\operator@font mod}\,\,#1)}
\def\@textbottom{\vskip\z@\@plus 18pt}
\let\@texttop\relax
\title[Transversal size of a series of families constructed over Cycle Graph]{\textnormal{A note on the transversal size of a series of families\\ constructed over Cycle Graph}}
\author{Kaushik Majumder}
\author{Satyaki Mukherjee}
\address{\newline Theoretical Statistics and Mathematics Unit\\ \newline Indian Statistical Institute, Bangalore Centre \\ \newline $8^{th}$ Mile Mysore Road, Bangalore - $560059$, India. \newline \textnormal{\textestimated-Mails}: {\tt kaushikbnmajumder\MVAt gmail.com} (\textnormal{Kaushik Majumder})\\\newline\hspace*{1.25cm}{\tt paglasatyaki\MVAt gmail.com} (\textnormal{Satyaki Mukherjee}).}
\date{January~9,2015}
\subjclass[2010]{Primary: 05B30, 05D15; Secondary: 05C65}
\keywords{Uniform hypergraph, Intersecting family of $k-$sets, Blocking set, Transversal}
\begin{document}

\begin{abstract}
Paul Erd\H{o}s and L\'{a}szl\'{o} Lov\'{a}sz established by means of an example that there exists a maximal intersecting family of $k-$sets with approximately $(e-1)k!$ blocks. L\'{a}szl\'{o} Lov\'{a}sz conjectured that their example is best known example which has the maximum number of blocks. Later it was disproved. But the quest for such examples remain valid till this date. In this short note, by computing transversal size of a certain series of uniform intersecting families constructed over the cycle graph, we provide an example which has more than $(\frac{k}{2})^{k-1}$ (approximately) blocks.   
\end{abstract}

\maketitle

\section{Introduction}

By a \emph{family} we mean a collection (set) of finite sets. A family is called \emph{intersecting} if any two of its members have non empty intersection. Given a family $\mathcal{G}$, the members of $\mathcal{G}$ are called its \emph{blocks} and the elements of the blocks are called its \emph{points}. The \emph{point set} of the family $\mathcal{G}$ is defined as $\underset{B\in\mathcal{G}}{\cup} B$ and is denoted by $\Pt_{\mathcal{G}}$. A family $\mathcal{G}$ is said to be \emph{uniform} if all its blocks have the same size. A uniform family with common block size $k$ is referred to as \emph{family of $k-$sets}. A set $C$ is said to be a \emph{blocking set} of a finite non empty family $\mathcal{G}$ if $C$ intersects every block of $\mathcal{G}$. A minimum size blocking set of $\mathcal{G}$ is called a \emph{transversal} of $\mathcal{G}$. We denote the common size of its transversals by $\tr{\mathcal{G}}$ and the family of transversals of $\mathcal{G}$ by $\mathcal{G}^{\top}$. Note that $\mathcal{G}^{\top}
$ is a uniform family.

Let $k$ and $t$ be positive integers. A family of $k-$sets $\mathcal{F}$ is said to be a \emph{maximal intersecting family of $k-$sets} if $\mathcal{F}=\mathcal{F}^{\top}$. It is not clear from the definition whether such a family has finite number of blocks. Erd\H{o}s and Lov\'{a}sz proved the surprising result that any such family has at most $k^{k}$ blocks (see \cite[Theorem~7]{MR0382050}). This result is of central attraction in the study of intersecting family of $k-$sets with finite transversal size. It allows us to define the integer $\M{k}$ to be the maximum number of blocks achievable by any maximal intersecting family of $k-$sets. So it is very natural to ask for the question to find a maximal intersecting family of $k-$sets with $\M{k}$ blocks. In \cite{aX14111480}, it is answered by means of an example that $\M{k}\geq(\frac{k}{2})^{k-1}$. The core part of this example is to produce an intersecting family of $k-$sets with transversal size $t\leq k-1$ in such a way that it can be embedded in a 
maximal intersecting family of $k-$sets. To fulfil this purpose it is very relevant to study this core part. This leads to the concept of a ``closure property'', which is studied in \cite[\textsection~2]{aX14111480}. 

In this note we construct (see $\mathbb{F}(k,t)$ in Construction~\ref{F(k,t)} below) a series of intersecting families of $k-$sets with transversal size $t\leq k-1$ such that each such family can be embedded in a maximal intersecting family of $k-$sets. Such a construction is not entirely new. There are similar type of families, namely $\mathscr{G}$ in \cite[\textsection~2]{MR1383503}. However, the compact description given here is amenable to rigorous arguments. Our purpose of this note is to show that transversal size of $\mathbb{F}(k,t)$ is $t$ (Theorem~\ref{trF(k,t)}) and as a consequence we have, for any positive integer $k$,
\begin{equation}\label{M(k)}\tag{$\star$}
\M{k}\geq|\mathbb{F}(k,k-1)|+|\mathbb{F}^{\top}(k,k-1)|>\left(\frac{k}{2}\right)^{k-1}.
\end{equation}

\begin{construction}\label{F(k,t)}
Let $k$ and $t$ be positive integers with $t\leq k$. Let $X_{n}$ $0\leq n\leq t-1$, be $t$ pairwise disjoint sets with  
\begin{align*}
|X_{n}|=\left\{\begin{array}{lcr}
                k-\lfloor\frac{t}{2}\rfloor & \textnormal{if} & 0\leq n\leq\lfloor\frac{t-1}{2}\rfloor\\
                k-\lfloor\frac{t-1}{2}\rfloor & \textnormal{if} & \lfloor\frac{t-1}{2}\rfloor+1\leq n\leq t-1
            \end{array}\right.
\end{align*}
say $X_{n}=\{x^{n}_{p}:0\leq p\leq |X_{n}|-1\}$. Let $\mathbb{F}(k,t)$ be the family of all the $k-$sets of the form 
\begin{equation*}
X_{n}\sqcup\left\{x^{n+i}_{p_{i}}:1\leq i\leq k-|X_{n}|\right\},
\end{equation*}
where $0\leq n\leq t-1$, addition in the superscript is modulo $t$ and $\{p_{m}:m\geq 0\}$ varies over all finite sequences of non negative integers satisfying,  
\begin{equation}\label{definition_p_n}\tag{$\star\star$}
p_{0}=0\textup{ and for }m\geq1, p_{m}=p_{m-1}\textup{ or } 1+p_{m-1}.   
\end{equation}
\end{construction}

In this construction, the pairwise disjoint sets $X_{n}$ may be thought as arranged along a $t-$cycle. Since the diameter of a $t-$cycle is $\lfloor\frac{t}{2}\rfloor$, it is easy to verify that $\mathbb{F}(k,t)$ is an intersecting family of $k-$sets.

\begin{theorem}\label{trF(k,t)}
$\tr{\mathbb{F}(k,t)}=t$.
\end{theorem}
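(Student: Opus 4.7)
For the upper bound, the set $C := \{x_0^m : 0 \le m \le t-1\}$ is a blocking set of cardinality exactly $t$: by construction every block has the form $X_n \sqcup \{\ldots\}$ and thus contains $X_n$ for some $n$, which in turn contains $x_0^n \in C$. This gives $\tr{\mathbb{F}(k,t)} \le t$, so the substantive work is in the lower bound $\tr{\mathbb{F}(k,t)} \ge t$.

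Let $T$ be an arbitrary blocking set, put $b_m := |T \cap X_m|$ and $S := \{m : b_m = 0\}$. Since the $X_m$ are pairwise disjoint, $|T| = \sum_m b_m$; when $S = \emptyset$ each $b_m \ge 1$ and $|T| \ge t$ is immediate, so assume $S \ne \emptyset$. For each $m \in S$, $T$ must meet the tail $\{x^{m+i}_{p_i} : 1 \le i \le L_m\}$ (with $L_m := k - |X_m|$) of every admissible staircase $(p_i)$ from \eqref{definition_p_n}. The standard dynamic program on the staircase lattice---with $A_0 := \{0\}$ and $A_i := (A_{i-1} \cup (A_{i-1}+1)) \setminus \{p : x^{m+i}_p \in T\}$---then forces $A_{j^*(m)} = \emptyset$ for some smallest $j^*(m) \le L_m$. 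Since $\max(A_{i-1}) + 1 \notin A_{i-1}$, we have $|A_{i-1} \cup (A_{i-1}+1)| \ge |A_{i-1}| + 1$, so $|A_i| \ge |A_{i-1}| + 1 - b_{m+i}$; telescoping from $i = 1$ to $j^*(m)$ yields the key inequality
\[
\sum_{i=1}^{j^*(m)} b_{m+i} \ge j^*(m) + 1.
\]

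Set $E_j := b_j - 1 \ge 0$ for $j \in Z := \{0, \ldots, t-1\} \setminus S$, so that $|T| - t = \sum_{j \in Z} E_j - |S|$, and the goal reduces to $\sum_{j \in Z} E_j \ge |S|$. Summing the displayed DP inequality over $m \in S$ and substituting $b_j = 1 + E_j$ on $Z$ and $b_j = 0$ on $S$ gives $\sum_{j \in Z} E_j K_j \ge |S| + \sum_{j \in S} K_j$, where $K_j := |\{m \in S : j \in (m, m+j^*(m)]\}|$ counts how many DP-arcs cover $j$; if $K_j \le 1$ on $Z$ the bound $\sum_{j \in Z} E_j \ge |S|$ is immediate. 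The main obstacle I anticipate is the overlapping case $K_j \ge 2$, which I plan to handle by promoting the above averaging into a Hall-type matching: consider the bipartite graph with left vertex set $S$ and right vertex set the multiset $\bigsqcup_{j \in Z} \{1, \ldots, E_j\}$, with $m \in S$ adjacent to all units indexed by tail positions $j \in (m, m + L_m]$ (cyclic). The Hall condition $\sum_{j \in R(S') \cap Z} E_j \ge |S'|$ for $S' \subseteq S$ (with $R(S') := \bigcup_{m \in S'} (m, m+L_m]$) follows by partitioning $S'$ into cyclic clusters of members whose DP-arcs nest or overlap, applying the strengthened form of the DP inequality $\sum_{j \in Z \cap (m, m+j^*(m)]} E_j \ge 1 + |S \cap (m, m+j^*(m)]|$ (each intermediate skipped index contributes an extra $-1$ that must be compensated), and checking the inequality cluster by cluster since distinct clusters sit in cyclically disjoint arcs. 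Applying Hall's theorem then injects $S$ into the excess multiset, giving $\sum_{j \in Z} E_j \ge |S|$ and hence $|T| \ge t$.
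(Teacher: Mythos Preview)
Your upper bound and the dynamic-programming recursion on staircases are both correct and coincide exactly with the paper's mechanism (the paper's sets $P_n$ are your $A_n$, and its inductive bound $|P_n|\ge 1+l_n$ is your telescoped inequality). The divergence is in how the starting vertex is chosen, and this is where your argument has a genuine gap.

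The paper does \emph{not} run the DP from every $m\in S$ and then try to aggregate. Instead it applies Raney's cycle lemma: if $|T|=t-1$ then $\sum_{n}(1-b_n)=1$, so some cyclic shift has all partial sums strictly positive; the corresponding index $\mu$ automatically lies in $S$ (since $1-b_\mu\ge 1$ forces $b_\mu=0$), and the positivity of the partial sums says precisely that $l_n=n-\sum_{i=1}^{n}b_{\mu+i}\ge 0$ for every $n$. Hence the DP started at $\mu$ \emph{never} empties, producing a block disjoint from $T$. One starting point, one contradiction, no aggregation needed.

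Your Hall-matching plan, by contrast, is only a sketch, and the crucial ``check the inequality cluster by cluster'' step is not carried out. Concretely: for a cluster $\{m_1,\dots,m_c\}$ whose DP-arcs have union $I$, you need $\sum_{j\in Z\cap I}E_j\ge c$. The strengthened DP inequality at a single $m_i$ gives $\sum_{j\in Z\cap I_{m_i}}E_j\ge 1+|S\cap I_{m_i}|$, but $|S\cap I_{m_i}|$ need not be $c-1$, and summing over several $m_i$'s double-counts the excesses on the overlaps---exactly the $K_j\ge 2$ difficulty you flagged but did not resolve. Worse, when the cluster's arcs cover the entire cycle (which is possible since each arc can have length up to $\lfloor t/2\rfloor$), the cluster inequality \emph{is} the global inequality $\sum_{j\in Z}E_j\ge|S|$ you are trying to prove, so the decomposition is circular. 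Any honest completion of this route ends up reproving the cycle lemma in disguise: you must locate one $m\in S$ whose forward partial sums never overshoot, and that is exactly Raney's statement.

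So: keep your DP, but replace the Hall aggregation by the cycle lemma to select a single $\mu$. That closes the proof in two lines.
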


By using Theorem~\ref{trF(k,t)}, we have $\{x_{i}\in X_{i}:0\leq i\leq t-1\}$ is a transversal of $\mathbb{F}(k,t)$.
Therefore, there are $\overset{t-1}{\underset{i=0}\prod}|X_{i}|$ choices for such transversals. But there are other transversals. Hence
\begin{align*}
|\mathbb{F}^{\top}(k,t)|>\left\{\begin{array}{lcl}
                (k-r+1)^{2r-1} & \textnormal{if} & t=2r-1\\
                (k-r)^{r}(k-r+1)^{r} & \textnormal{if} & t=2r.
            \end{array}\right.
\end{align*}

Let $\mathcal{A}$ be a maximal intersecting family of $(k-t)-$sets. Let $\Pt_{\mathcal{A}}$ and $\Pt_{\mathbb{F}^{\top}(k.t)}$ be disjoint. By \cite[Proposition~3.4]{aX14111480} and \cite[Theorem~2.7]{aX14111480}, it follows that $\mathbb{F}(k,t)\sqcup\{\mathcal{A}\circledast\mathbb{F}^{\top}(k,t)\}$ is a maximal intersecting family of $k-$sets. Here $\mathcal{A}\circledast\mathbb{F}^{\top}(k,t)$ denotes the collection of all sets of the form $A\sqcup T$, where $A\in\mathcal{A}$ and $T\in\mathbb{F}^{\top}(k,t)$. If we consider the case $t=k-1$, we have  $\mathbb{F}(k,k-1)\sqcup\{\mathcal{A}\circledast\mathbb{F}^{\top}(k,k-1)\}$ is a maximal intersecting family of $k-$sets and as a consequence we deduce \eqref{M(k)}.

\section{Proof of Theorem~\ref{trF(k,t)}}

The following remarkable lemma is essentially the case $n=1$ of \cite[Theorem~2.1]{MR0114765}. Since the original proof is obscured by many hypotheses and technical terms, we include a simpler proof for the sake of completeness.

Recall that, for any finite sequence $(r_{1},\ldots,r_{t})$ its cyclic shifts are the $t$ sequences $(r_{i+1},\ldots,r_{i+t})$ where $0\leq i\leq t-1$ and the addition in the subscripts is modulo $t$.

\begin{lemma}[Raney]\label{Raney_lemma}
Let $(r_{1},r_{2},\ldots,r_{t})$ be a finite sequence of integers such that $\overset{t}{\underset{i=1}\sum}r_{i}=1$. Then, exactly one of the of the $t$ cyclic shifts of this sequence has all its partial sums strictly positive.
\end{lemma}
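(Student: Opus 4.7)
The plan is to reformulate the condition in terms of the partial sums $S_j := r_1+r_2+\cdots+r_j$ (with $S_0=0$ and $S_t=1$) extended periodically by $S_{j+t}=S_j+1$. A direct computation shows that the cyclic shift $(r_{i+1},\ldots,r_{i+t})$ has all its partial sums strictly positive if and only if $S_i<S_{i+m}$ for every $m\in\{1,\ldots,t-1\}$ (the case $m=t$ gives $S_{i+t}-S_i=1>0$ automatically). So the task becomes: show there is exactly one $i\in\{0,1,\ldots,t-1\}$ whose value $S_i$ is strictly dominated by each of the next $t-1$ values in the extended sequence.

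For \emph{existence}, I would let $i^{*}$ be the \textbf{largest} index in $\{0,1,\ldots,t-1\}$ at which $S$ attains its minimum over that range. Then for $j\in\{i^{*}+1,\ldots,t-1\}$ the choice of $i^{*}$ forces $S_j>S_{i^{*}}$; and for $j\in\{0,\ldots,i^{*}-1\}$ minimality gives $S_j\ge S_{i^{*}}$, hence $S_{j+t}=S_j+1>S_{i^{*}}$. These two ranges together cover every index $i^{*}+m$ with $1\le m\le t-1$, so the cyclic shift starting at position $i^{*}$ works.

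For \emph{uniqueness}, I would argue by contradiction: suppose $0\le i_1<i_2\le t-1$ both satisfy the condition. Applying the condition for $i_1$ at the shift $m=i_2-i_1$ gives $S_{i_2}>S_{i_1}$, and because the $S_j$ are integers this upgrades to $S_{i_2}\ge S_{i_1}+1$. Applying the condition for $i_2$ at the shift $m=t-(i_2-i_1)$ gives $S_{i_1+t}=S_{i_1}+1>S_{i_2}$. The two inequalities contradict each other.

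There is no serious obstacle; the whole proof is a short bookkeeping exercise once the periodic extension is introduced. The only point that requires care is the tie-breaking in the definition of $i^{*}$ (choosing the \emph{largest} minimizer rather than any minimizer), together with the use of integrality in the uniqueness step to sharpen the strict inequality $S_{i_2}>S_{i_1}$ to $S_{i_2}\ge S_{i_1}+1$.
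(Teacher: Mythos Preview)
Your proof is correct and follows essentially the same idea as the paper's: both locate the good cyclic shift by picking out the index where the sequence of partial sums is minimal. The only cosmetic difference is that the paper subtracts the linear term $n/t$ from the integer partial sums so that the $t$ resulting real numbers $s_n$ are automatically distinct (hence the minimizer is unique and no tie-breaking is needed, and uniqueness of the shift is obtained by showing any good shift forces $s_\mu$ to be the minimum), whereas you keep the integer partial sums, break ties by taking the \emph{largest} minimizing index, and handle uniqueness by a direct contradiction using integrality.
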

\begin{proof}
For $1\leq n\leq t$, let $s_{n}=r_{1}+\ldots+r_{n}-\frac{n}{t}$. Suppose, if possible, $s_{m}=s_{n}$ for some indexes $1\leq m<n\leq t$. Then $r_{m+1}+\ldots+r_{n}=\frac{n-m}{t}$, which is a contradiction, since the left hand side is an integer and the right hand side is a proper fraction. Thus, the $t$ numbers $s_{i}$ are distinct. So there is a unique index $\mu$, with $1\leq\mu\leq t$, for which $s_{\mu}$ is the minimum of these $t$ numbers. Now, for $\mu+1\leq m\leq t$,
\begin{equation*}
r_{\mu+1}+\ldots+r_{m}=(s_{m}-s_{\mu})+\frac{m-\mu}{t}>0
\end{equation*}
and for $1\leq m\leq\mu$,
\begin{align*}
r_{\mu+1}+\ldots+r_{t}+r_{1}+\ldots+r_{m}&=1-(s_{\mu}+\frac{\mu}{t})+(s_{m}+\frac{m}{t})\\
&=(s_{m}-s_{\mu})+1-\frac{\mu-m}{t}>0.
\end{align*}
Thus, the partial sums of $(r_{\mu+1},\ldots,r_{\mu+t})$ are all strictly positive. This proves the existence.

Conversely, let $\mu$ be an index for which the partial sums of $(r_{\mu+1},\ldots,r_{\mu+t})$ are all strictly positive. Then each of these partial sums is at least $1$, so that if we subtract a proper fraction from one of them, then the result remains positive. For $\mu+1\leq m\leq t$, 
\begin{equation*}
s_{m}-s_{\mu}=(r_{\mu+1}+\ldots+r_{m})-\frac{\mu-m}{t}\geq0
\end{equation*}
and for $1\leq m\leq\mu$,
\begin{equation*}
s_{m}-s_{\mu}=(r_{\mu+1}+\ldots+r_{t}+r_{1}+\ldots+r_{m})-\frac{\mu-m}{t}\geq0
\end{equation*}
Thus $\mu$ is the unique index for which $s_{\mu}=\min\{s_{i}:1\leq i\leq t\}$. This proves the uniqueness.
\end{proof}

\begin{proof}[\tt{Proof of Theorem~\ref{trF(k,t)}} :]
If $C$ is any $t-$set which intersects each $X_{n}$ in a singleton, then in particular $C$ is a blocking set of $\mathbb{F}(k,t)$. So $\tr{\mathbb{F}(k,t)}\leq t$. So, it suffices to show that $\mathbb{F}(k,t)$ has no blocking set $C$ of size $t-1$. Assume the contrary. For $0\leq n\leq t-1$,  $|C\cap X_{n}|$ is a non negative integer and $\overset{t-1}{\underset{i=0}\sum}|C\cap X_{i}|=t-1$. Therefore, if we define the integers $r_{n+1}=1-|C\cap X_{n}|$, where $0\leq n\leq t-1$, then $\overset{t}{\underset{i=1}\sum}r_{i}=1$. So applying Lemma~\ref{Raney_lemma} to this sequence, we get a unique $0\leq \mu\leq t-1$ such that $\overset{n}{\underset{i=0}\sum}r_{\mu+i}\geq1$, i.e. 
$|C\cap(\overset{n}{\underset{i=0}\sqcup}X_{\mu+i})|\leq n$, for $0\leq n\leq t-1$. In particular, $C$ is disjoint from $X_{\mu}$. For $1\leq n\leq k-|X_{\mu}|$, let $l_{n}=n-\overset{n}{\underset{i=1}\sum}|C\cap X_{\mu+i}|$. Thus $l_{n}\geq0$. Let $P_{n}$ be the set of all integers $p\geq 0$ for which 
there is a sequence $(p_{1},\ldots,p_{n})$ satisfying \eqref{definition_p_n} such that $p_{n}=p$ and for $1\leq i\leq n$, $x^{\mu+i}_{p_{i}}\notin C$. 

\noindent{\textsl{Claim} :}  $|P_{n}|\geq1+l_{n}$ for $1\leq n\leq k-|X_{\mu}|$.

\begin{proof}[\tt{Proof of Claim} :]\renewcommand{\qedsymbol}{}
We prove it by finite induction on $n$. When $n=1$, 
\begin{align*}
|P_{n}|&=2-|C\cap X_{\mu+n}|\\
&=1+l_{n}.
\end{align*}
So the claim is true for $n=1$.

Now let $1\leq m\leq k-1-|X_{\mu}|$ and suppose that the claim is true for $m$. Since $|C\cap X_{\mu+m+1}|=1+l_{m}-l_{m+1}$ and clearly 
\begin{equation*}
P_{m+1}\supseteqq(P_{m}\cup\{1+p:p\in P_{m}\})\smallsetminus(C\cap X_{\mu+m+1}),
\end{equation*}
we have 
\begin{align*}
|P_{m+1}|&\geq|P_{m}\cup\{1+p:p\in P_{n}\}|-|C\cap X_{\mu+m+1}|\\
&\geq 1+|P_{m}|-|C\cap X_{\mu+m+1}|\\
&\geq 2+l_{m}-(1+l_{m}-l_{m+1})\\
&=1+l_{m+1}
\end{align*}
This completes the induction and proves the claim.
\end{proof}
By the case $n=k-|X_{\mu}|$ of the claim, $P_{k-|X_{\mu}|}$ is non empty. Hence there is a sequence 
$\{p_{1},\ldots,p_{k-|X_{\mu}|}\}$ satisfying \eqref{definition_p_n} and disjoint from $C$. Therefore, the block 
$X_{\mu}\sqcup\{p_{i}:1\leq k-|X_{\mu}|\}$ is disjoint from $C$. Thus $C$ is not a blocking set of $\mathbb{F}(k,t)$. Since $C$ is an arbitrary set of size $t-1$, this shows $\tr{\mathbb{F}(k,t)}\geq t$.
\end{proof}

\begin{acknowledgement}
The authors would like to thank Professor Bhaskar Bagchi for pointing out the reference \cite{MR0114765}, which was unknown to us, and for his help in the preparation of this note. It improves the presentation of this note significantly. 
\end{acknowledgement}

\end{document}